\documentclass[12pt]{amsart}
\title[On a family of mild functions]{On a family of mild functions}
\author[S. Van Hille]{Siegfried Van Hille}
\address{KU Leuven, Celestijnenlaan 200B, 3001 Leu\-ven, Bel\-gium}
\email{siegfried.vanhille@kuleuven.be}

\usepackage[margin=3cm]{geometry}
\usepackage{amsfonts}
\usepackage{amsmath}
\usepackage{amsthm}
\usepackage{amssymb}
\usepackage{graphicx}
\usepackage{amsopn}
\usepackage{enumitem}
\usepackage{xcolor}
\usepackage{tikz}
\usetikzlibrary{matrix}
\usepackage[colorlinks]{hyperref}
\hypersetup{
	linkcolor = {red},
	citecolor = {blue}
}

\DeclareMathOperator{\R}{\mathbb{R}}

\DeclareMathOperator{\N}{\mathbb{N}}

\DeclareMathOperator{\im}{\text{Im}}

\theoremstyle{definition}
\newtheorem{definition}{Definition}[section]

\theoremstyle{plain}
\newtheorem{theorem}{Theorem}[section]

\newtheorem{lemma}[theorem]{Lemma}
\newtheorem{proposition}[theorem]{Proposition}
\newtheorem*{theorem*}{Theorem}
\newtheorem*{corollary*}{Corollary}
\newtheorem*{proposition*}{Proposition}

\makeatletter
\@namedef{subjclassname@2020}{%
  \textup{2020} Mathematics Subject Classification}
\makeatother

\begin{document}
\thanks{The author is partially supported by KU Leuven grant IF C14/17/083.}
\keywords{Mild function, mild parametrization, subanalytic sets}
\subjclass[2020]{14G05, 26E10, 32B20}

\maketitle
\begin{abstract}
We prove that the function $P_\alpha(x) = \exp(1-x^{-\alpha})$ with $\alpha >0$ is $1/\alpha$-mild. We apply this result to obtain a uniform $1/\alpha$-mild parametrization of the family of curves $\{ xy = \epsilon^2 \mid (x,y) \in (0,1)^2 \}$ for $\epsilon \in (0,1)$,  which does not have a uniform $0$-mild parametrization by work of Yomdin. More generally we can parametrize families of power-subanalytic curves. This improves a result of Benjamini and Novikov that gives a $2$-mild parametrization.
\end{abstract}

\section{Introduction}
We study the function 
\[
P_\alpha : (0,1) \to (0,1) : x \mapsto \exp(1-x^{-\alpha})
\]
with $\alpha > 0$ in the context of counting rational points. In \cite{countpointsexp}, Pila gave an upper bound on the number of rational points of bounded height on a certain exponential-algebraic surface by parametrizing it using $P_\alpha$. The bound on the derivatives of $P_\alpha$, or more generally of mild functions, are suited to bound the number of rational points on the set that it parametrizes, see \cite[Corollary 3.3]{countpointsexp}. The constant $C$ in the definition of a mild function (see Definition \ref{defmild}) plays an important role in this bound. 

We will show that for the map $P_\alpha$, one can take $C = 1/\alpha$. This is sharper than the result of Pila, where he shows $C = 1+1/\alpha$ (\cite[Proposition 2.8.]{countpointsexp}) and has the advantage of being close to zero for $\alpha$ large. The main obstacle in bounding the derivatives is the polynomial in $1/x$ that appears in the computations, which is rather large.

The reason why the map $P_\alpha$ is interesting in this field, is because some functions become mild after composing with it. This has been applied successfully for $\alpha = 1$ in \cite{ccs} to construct $2$-mild parametrizations of families of subanalytic sets. In Section \ref{sec3} we show our main results. We prove such properties for some classes of functions which appear in \cite{unif} in Theorem \ref{abmmild}, from which the result on parametrizations of curves follows in Theorem \ref{curvepara}. In both results we obtain $C = 1/\alpha$ for $\alpha > 0$.

If one considers a fixed subanalytic set,  there are model theoretic results that imply that it has a $0$-mild parametrization (see for instance \cite{mildpara} for subanalytic sets). However, Yomdin has shown in \cite{yomdin2} that any analytic parametrization of the algebraic family of curves in the abstract necessarily has to contain infinitely many charts (see also \cite{ex}). More precisely, the number of charts in a $0$-mild parametrization will depend on $\epsilon$ and tend to infinity as $\epsilon$ tends to zero. We will construct an explicit $1/\alpha$-mild parametrization of this family where the size of the parametrization does not depend on $\epsilon$. Moreover, we can do this for any family of power-subanalytic curves using a pre-parametrization result of \cite{unif} or \cite{para}. Taking the above example in mind, this seems optimal. More on the field of parametrizations can be found in the survey \cite{ex}.

\section{Preliminaries}
In this short section we fix some notation for multidimensional calculus and revisit the formula for arbitrary derivatives of a function in several variables, also known as the Fa\`a di Bruno formula. 

Let $f \in C^\infty(U,\R)$ be an infinitely differentiable function on an open $U \subset (0,1)^m$. For $\nu \in \N^m$ we denote $|\nu| = \nu_1+\ldots + \nu_m$ and
\[
f^{(\nu)}(x) = \left(\frac{\partial^{|\nu|}}{\partial x_1^{\nu_1}\cdots \partial x_m^{\nu_m}} f\right)(x).
\]
For $f \in C^\infty(U,\R^n)$ we set $f^{(\nu)} = (f_1^{(\nu)},\ldots,f_n^{(\nu)})$. Finally, for $x \in U$ and $\mu \in \R^m$ we denote:
\[
x^\mu = \prod_{i = 1}^m x_i^{\mu_i}.
\]

\begin{definition}[Mild functions] \label{defmild}
Let $f \in C^\infty(U,\R)$, $A,B \in \R_{>0}$ and $C \in \R_{\geq 0}$. We say that $f$ is $(A,B,C)$-mild if for any $x \in U$ and $\nu \in \N^m$:
\[
|f^{(\nu)}(x)| \leq B A^{|\nu|}(|\nu|!)^{C+1}.
\]
A map $f \in C^\infty(U,\R^n)$ is $(A,B,C)$-mild if all its component functions are. We say that $f$ is $C$-mild if it is $(A,B,C)$-mild for some $A$ and $B$.
\end{definition}

A $0$-mild function is analytic, but the converse is not true. The composition of $C$-mild functions is $C$-mild. A proof can be found in the next section. It relies on the formula for arbitrary order derivatives of a composition, which is also known as the Fa\`a di Bruno formula. In the univariate case, in propositions \ref{comp}, \ref{main} and \ref{weakpower}, we will use a more compact version stated in \cite[p.51]{dio}.

\begin{proposition}[Fa\`a di Bruno, \cite{faa}]\label{chain}
Suppose that $n$ is a positive integer, $V \subset \R^d$ and $U \subset \R^e$ are open, $f: V \to \R$, $g: U \to V$ and that $f$ and $g$ are $C^n$. For any $x \in U$ and $\nu \in \N^e$ with $|\nu| = n$ we have that: $$(f \circ g)^{(\nu)}(x) = \sum_{1 \leq |\lambda| \leq n} f^{(\lambda)}(g(x)) \sum_{s = 1}^n \sum_{p_s(\nu,\lambda)} \nu! \prod_{j = 1}^s \frac{(g^{(l_j)}(x))^{k_j}}{k_j!(l_j!)^{k_j}}$$ where $p_s(\nu,\lambda)$ is the set consisting of all $k_1,\ldots,k_s \in \N^d$ with $|k_i| > 0$ and $l_1,\ldots,l_s \in \N^e$ with $0 \prec l_1 \prec \ldots \prec l_s$ such that: $$ \sum_{i = 1}^sk_i = \lambda$$ and $$\sum_{i = 1}^s |k_i|l_i = \nu.$$ Here $l_i \prec l_{i+1}$ means that $|l_i| < |l_{i+1}|$ or, if $|l_i| = |l_{i+1}|$, then $l_i$ comes lexicographically before $l_{i+1}$.
\end{proposition}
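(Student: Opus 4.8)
The plan is to read the formula off from a comparison of two Taylor expansions of $f\circ g$; an induction on $|\nu|$ (differentiate the order-$n$ formula once more and reorganize the terms coming from the chain rule on $f^{(\lambda)}(g(x))$ versus the product rule on the $g$-factors) works as well and is perhaps more elementary, but it hides where the combinatorial weights come from. Fix $x\in U$, set $n=|\nu|$, and note that it suffices to match coefficients of the monomials $h^\mu$, $|\mu|\le n$, in the expansions below, all of which are determined up to $o(|h|^n)$ as $h\to 0$ in $\R^e$. Since $f$, $g$, and hence $f\circ g$ are $C^n$, Taylor's theorem gives, on one side, $(f\circ g)(x+h)=\sum_{|\mu|\le n}\tfrac{1}{\mu!}(f\circ g)^{(\mu)}(x)\,h^\mu+o(|h|^n)$. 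On the other side, write $g(x+h)=g(x)+w$ with $w=w(h)=\sum_{1\le|l|\le n}\tfrac{1}{l!}g^{(l)}(x)\,h^l\in\R^d$, which is $O(|h|)$, and expand $f$ about $g(x)$: $f(g(x)+w)=\sum_{|\lambda|\le n}\tfrac{1}{\lambda!}f^{(\lambda)}(g(x))\,w^\lambda+o(|h|^n)$; substituting one polynomial into the other is legitimate precisely because $w=O(|h|)$ vanishes at $h=0$.

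The heart of the matter is then to expand $w^\lambda=\prod_{m=1}^d w_m^{\lambda_m}$ into monomials in $h$ and extract the coefficient of $h^\nu$. Each $w_m$ is a sum over multi-indices $l$ with $1\le|l|\le n$, so specifying, across all $d$ coordinates, which $l$'s occur and with what multiplicity in each coordinate is exactly the data $(s;k_1,\dots,k_s;l_1,\dots,l_s)$ indexing $p_s(\nu,\lambda)$: the $l_1\prec\cdots\prec l_s$ are the distinct multi-indices that actually appear (the strict ordering makes the enumeration canonical), $k_j\in\N^d$ records the multiplicity of $l_j$ coordinate by coordinate, the constraint $\sum_j k_j=\lambda$ says the total multiplicity in coordinate $m$ equals $\lambda_m$, and $\sum_j|k_j|l_j=\nu$ says the resulting $h$-monomial is $h^\nu$. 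Collecting the multinomial coefficients produced by expanding each $w_m^{\lambda_m}$ together with the powers of the Taylor coefficients $1/l_j!$, and comparing with the coefficient $\tfrac{1}{\nu!}(f\circ g)^{(\nu)}(x)$ on the other side, one finds that the coefficient of $f^{(\lambda)}(g(x))$ in $(f\circ g)^{(\nu)}(x)$ is precisely $\nu!\sum_{s}\sum_{p_s(\nu,\lambda)}\prod_{j=1}^s\frac{(g^{(l_j)}(x))^{k_j}}{k_j!(l_j!)^{k_j}}$, which is the claimed identity.

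The main obstacle is exactly this bookkeeping of multinomial coefficients: verifying that the number of ways to realize a given monomial $h^\nu$, weighted by the Taylor coefficients of $g$, collapses to the closed form $\nu!\prod_j\frac{1}{k_j!(l_j!)^{k_j}}$, and that no configuration is double counted. The two devices that control the symmetry are the factor $1/k_j!$, which cancels the overcount from permuting equal contributions within the block attached to $l_j$, and the strict ordering $0\prec l_1\prec\cdots\prec l_s$, which cancels the overcount from permuting distinct blocks. A clean way to organize this without ad hoc arguments is to prove the identity first in the universal setting where $g$ is a tuple of formal power series with independent indeterminate coefficients and $f$ a formal power series in $d$ variables: there it becomes an identity of formal power series in countably many indeterminates, which then specializes to all actual $C^n$ maps. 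All manipulations above used only $C^n$ regularity and truncation at degree $n$, so nothing more is needed.
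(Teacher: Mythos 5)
The paper does not actually prove Proposition \ref{chain}: it is quoted from \cite{faa} and used as a black box, so there is no internal argument to compare yours against. Your Taylor-coefficient derivation is the standard self-contained route to the multivariate Fa\`a di Bruno formula, and as an outline it is correct: matching the coefficient of $h^\nu$ in the order-$n$ expansion of $(f\circ g)(x+h)$ against that of $f(g(x)+w(h))$, with $w(h)$ the constant-free Taylor polynomial of $g$, does produce exactly the index set $p_s(\nu,\lambda)$ and the weights $\nu!\prod_j \frac{1}{k_j!(l_j!)^{k_j}}$ once each $w_m^{\lambda_m}$ is expanded by the multinomial theorem --- the factor $1/k_j!$ is precisely the product over the coordinates $m$ of the multinomial denominators attached to $l_j$ (so nothing is being ``cancelled''; it appears directly), and the ordering $l_1\prec\cdots\prec l_s$ merely makes the enumeration of configurations injective. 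Two points deserve one more line of care than you give them: (i) the substitution step requires $f\bigl(g(x)+w(h)+o(|h|^n)\bigr)=f\bigl(g(x)+w(h)\bigr)+o(|h|^n)$, which uses that $f$ is locally Lipschitz (being $C^1$), not only that $w=O(|h|)$ vanishes at $h=0$; and (ii) the final comparison of coefficients rests on the standard fact that a polynomial of degree at most $n$ in $h$ that is $o(|h|^n)$ vanishes identically. Neither is a genuine gap. The one substantive omission is that the multinomial bookkeeping, which you rightly call the heart of the matter, is described rather than executed; for a statement the paper itself outsources to the literature this is defensible, but a complete proof would display that expansion explicitly rather than assert its outcome.
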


\section{The mildness of $P_\alpha$}
In this section we prove that $P_\alpha$ is $1/\alpha$-mild. Writing $P_\alpha$ as a composition of two functions, we will use the Fa\`a di Bruno formula to bound the derivatives. The proof uses the fact that the composition of $0$-mild functions is $0$-mild. Indeed, this follows from the theory of Gevrey functions \cite{Ge}, and a proof in full generality can be found in \cite{para}. To be self contained, we briefly repeat its proof, but formulated in terms of mild functions.

\begin{proposition} \label{comp}
Suppose that $f$ and $g$ are respectively $(A_f,B_f,C)$-mild and $(A_g,B_g,C)$-mild. Then the composition $f \circ g$ is $C$-mild. Moreover, if $C = 0$ we have:
\[
A = A_g(1+A_fB_g) \quad \text{and} \quad B = \frac{A_fB_fB_g}{1+A_fB_g} < B_f.
\]
\end{proposition}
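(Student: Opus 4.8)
The plan is to apply the Faà di Bruno formula (Proposition 1, i.e. \ref{chain}) directly and bound each factor. Fix $x$ and $\nu$ with $|\nu| = n$. Each term in the expansion is a product of $f^{(\lambda)}(g(x))$ with factors $(g^{(l_j)}(x))^{k_j}/(k_j!(l_j!)^{k_j})$. Using the hypotheses, I would replace $|f^{(\lambda)}(g(x))|$ by $B_f A_f^{|\lambda|}(|\lambda|!)^{C+1}$ and each $|g^{(l_j)}(x)|^{k_j}$ (this is a product over the $d$ coordinates, but the bound $|g_i^{(l_j)}| \le B_g A_g^{|l_j|}(|l_j|!)^{C+1}$ applies to each, so one gets $\prod_i$ of these, raising $B_g A_g^{|l_j|}(|l_j|!)^{C+1}$ to the power $|k_j|$ after taking absolute values — here I should be careful, since $(g^{(l_j)})^{k_j} = \prod_{i=1}^d (g_i^{(l_j)})^{(k_j)_i}$, so the exponent of $B_g A_g^{|l_j|}(|l_j|!)^{C+1}$ is $(k_j)_1 + \cdots + (k_j)_d = |k_j|$). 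Collecting the $A$'s gives $A_f^{|\lambda|} A_g^{\sum |k_j| |l_j|} = A_f^{|\lambda|} A_g^{n}$ using the constraint $\sum |k_i| l_i = \nu$, hence $\sum |k_i||l_i| = |\nu| = n$. Collecting the $B$'s gives $B_f B_g^{\sum|k_j|} = B_f B_g^{|\lambda|}$ using $\sum k_i = \lambda$.

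The factorial bookkeeping is the crux. After the substitutions the generic term is bounded by
\[
B_f A_f^{|\lambda|} A_g^n (|\lambda|!)^{C+1} \, \nu! \prod_{j=1}^s \frac{((l_j!)^{C+1})^{|k_j|}\, B_g^{|k_j|}}{k_j! (l_j!)^{|k_j|}} = B_f A_f^{|\lambda|} A_g^n B_g^{|\lambda|} (|\lambda|!)^{C+1}\, \nu! \prod_{j=1}^s \frac{(l_j!)^{C|k_j|}}{k_j!}.
\]
I would then bound $\prod_j (l_j!)^{C|k_j|} \le (n!)^{C'}$ type estimates — more precisely, since $\sum |k_j| l_j = \nu$ and $l_j! \le (|l_j|!) \le n!$ crudely, one can absorb $\prod (l_j!)^{C|k_j|}$ into a power of $n!$ and, together with $(|\lambda|!)^{C+1}\le (n!)^{C}\cdot |\lambda|!$ (since $|\lambda|\le n$) and $\nu! \le n!$, conclude a bound of the shape $(\text{const})^n (n!)^{C+1}$ times the combinatorial count $\sum_{\lambda} \sum_s \sum_{p_s(\nu,\lambda)} \frac{\nu!}{\prod k_j!}\cdot(\text{geometric factors})$. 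The remaining sum is exactly the one that, in the analytic ($C=0$) case, is known to reproduce the generating-function identity for composition of power series; this is where the precise constants $A = A_g(1 + A_f B_g)$ and $B = A_f B_f B_g/(1 + A_f B_g)$ come from. Concretely, for $C=0$ I would compare with the scalar majorant series: if $|f^{(\lambda)}| \le B_f A_f^{|\lambda|}|\lambda|!$ then $f$ is majorized near $g(x)$ by $t \mapsto B_f/(1 - A_f t)$, and $g$ by $t \mapsto B_g/(1-A_g t) - B_g$ (vanishing at $0$, matching the shift to $g(x)$), and composing these two rational majorants and reading off the coefficients gives exactly the stated $A$ and $B$; the inequality $B < B_f$ is then immediate since $A_f B_g > 0$.

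The main obstacle will be the factorial/multinomial estimate $\prod_{j}(l_j!)^{C|k_j|} (|\lambda|!)^{C} \le (n!)^{C}$ (up to the combinatorial multiplicity already accounted for): one needs that distributing $n$ among the blocks dictated by the $k_j$ and $l_j$ never increases the product of factorials beyond $n!$, i.e. a super/sub-multiplicativity statement for factorials along the partition encoded by $(k_j, l_j)$. For $C = 0$ this degenerates and the whole argument collapses to the classical majorant computation, which is why the clean formulas are available only in that case. I would organize the write-up as: (1) state the scalar majorant lemma for a single $0$-mild function; (2) carry out the Faà di Bruno substitution and the $A$-, $B$-collection as above; (3) handle the factorial powers for general $C$ to get $C$-mildness; (4) for $C = 0$, identify the leftover sum with the composition of the two rational majorants and extract $A$ and $B$.
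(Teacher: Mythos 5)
Your proposal is correct and follows essentially the same route as the paper: Fa\`a di Bruno plus the triangle inequality, with the $C=0$ constants extracted by composing the two rational majorants $B_f/(1-A_ft)$ and $B_g/(1-A_gt)$ (your shift of $g$'s majorant to vanish at $0$ is equivalent to the paper's shift of $f$'s majorant to be centered at $B_g$, and yields the same $A$ and $B$). The only divergence is the reduction of general $C$ to $C=0$: the paper uses convexity of $x\mapsto x^{C+1}$, taking $(C+1)$-th roots of the constants and using $\sum_j a_j^{C+1}\le\bigl(\sum_j a_j\bigr)^{C+1}$ for nonnegative $a_j$, whereas you propose the factorial inequality $|\lambda|!\,\prod_j(|l_j|!)^{|k_j|}\le n!$ --- which does hold (iterate $a!\,b!\le(a+b-1)!$ over the $|\lambda|$ parts of $\sum_j|k_j|\,|l_j|=n$) and serves the same purpose, so either bookkeeping closes the argument.
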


\begin{proof}
By Proposition \ref{chain}, using the triangle inequality and the mildness of $f$ and $g$, we obtain for any $x \in (0,1)$ a sum of positive terms:
\[
|(f \circ g)^{(n)}(x)| \leq \sum_{k = 1}^n B_fA_f^k (k!)^{C+1} \left( \sum_{n,k} B_n(k_1,\ldots,k_n) \prod_{i = 1}^n (B_gA_g^i(i!)^{C+1})^{k_i}\right),
\]
where the sum is over all $n$ and $k$ satisfying the relations stated in the theorem. Since $x \mapsto x^{C+1}$ is convex, taking the $(C+1)$-th rooth of the constants where needed, we are done if we can show that if $C = 0$ the sum can be bounded by $BA^nn!$.

To this end, consider the maps:
\[
\psi(x) = \frac{B_f}{1-A_f(x-B_g)} \quad \text{and} \quad \phi(x) = \frac{B_g}{1 - A_gx}.
\]
One easily verifies that $\psi^{(k)}(B_g) = B_fA_f^kk!$ and $\phi^{(k)}(0) = B_gA_g^kk!$, which are exactly the bounds on the derivatives of $f$ and $g$. Now in this case, one can compute $(\psi \circ \phi)^{(n)}(0)$ directly, without using the chain rule. One checks that:
\[
(\psi \circ \phi)^{(n)}(0) = \frac{A_fB_fB_g}{1+A_fB_g} (A_g(1+A_fB_g))^n n!.
\]
\end{proof}

Clearly, the composition of a $C_1$-mild map and a $C_2$-mild map is $\max(C_1,C_2)$-mild. We need two more elementary lemma's which will be used frequently.

\begin{lemma} \label{umild}
Suppose that $\alpha \geq 1$ and let $u_\alpha$ be the map $(0,1) \to \R$ given by $u_\alpha(x) = 1-x^{-\alpha}$. Then, for any $x \in (0,1)$ and $k > 0 \in \N$, we have:
\[
x^{\alpha+k}|u_\alpha^{(k)}(x)| \leq \alpha^k k!.
\]
\end{lemma}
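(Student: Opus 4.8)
The plan is to compute $u_\alpha^{(k)}$ in closed form and then reduce the inequality to an elementary term-by-term comparison. Since $u_\alpha(x) = 1 - x^{-\alpha}$, repeatedly differentiating the power function gives
\[
u_\alpha^{(k)}(x) = (-1)^{k+1}\Big(\prod_{j=0}^{k-1}(\alpha+j)\Big)\, x^{-\alpha-k}
\]
for every integer $k \geq 1$, which one verifies immediately by induction on $k$ (the additive constant $1$ disappears after the first derivative, and each further differentiation of $x^{-\alpha-j}$ produces the factor $-(\alpha+j)$). Consequently $x^{\alpha+k}|u_\alpha^{(k)}(x)| = \prod_{j=0}^{k-1}(\alpha+j)$, so the lemma is exactly the inequality $\prod_{j=0}^{k-1}(\alpha+j) \leq \alpha^k k!$.

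To prove this last inequality I would rewrite the right-hand side as a product over the same index range, $\alpha^k k! = \prod_{j=0}^{k-1}\alpha(j+1) = \prod_{j=0}^{k-1}(\alpha+\alpha j)$, and then compare factor by factor: because $\alpha \geq 1$ we have $\alpha j \geq j$, hence $\alpha + j \leq \alpha + \alpha j$ for each $0 \leq j \leq k-1$, and since all factors are positive the product inequality follows. This also shows that the hypothesis $\alpha \geq 1$ is essential and sharp: already for $k = 2$ the claimed bound $\alpha(\alpha+1) \leq 2\alpha^2$ is equivalent to $\alpha \geq 1$.

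I do not expect any genuine obstacle here; the only point needing a little care is pinning down the constant in the closed form for $u_\alpha^{(k)}$ — i.e. recognizing $\prod_{j=0}^{k-1}(\alpha+j)$ (a rising factorial in $\alpha$) as $(-1)^{k+1}$ times the $k$-th derivative of $-x^{-\alpha}$ at $x$, with $\alpha$ allowed to be any real in $[1,\infty)$ — and then keeping the two index ranges aligned in the final factor-by-factor estimate.
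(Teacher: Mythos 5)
Your proof is correct and follows the same route as the paper: compute the closed form $u_\alpha^{(k)}(x) = (-1)^{k+1}x^{-(\alpha+k)}\alpha(\alpha+1)\cdots(\alpha+k-1)$ and then bound the rising factorial by $\alpha^k k!$ using $\alpha \geq 1$. The paper leaves that last comparison implicit, whereas you spell out the factor-by-factor estimate $\alpha+j \leq \alpha(j+1)$; this is exactly the intended argument.
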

\begin{proof}
This follows immediately from:
\[
u_\alpha^{(k)}(x) = (-1)^{k+1}x^{-(\alpha+k)}\alpha(\alpha+1)\cdots(\alpha+(k-1))
\]
using that $\alpha \geq 1$ to obtain the desired upper bound.
\end{proof}

Clearly, if $\alpha < 1$, one can find the upper bound $k!$ instead of $\alpha^k k!$ in the lemma.

\begin{lemma} \label{expmild}
For $r,s \in \R_{>0}$ we have:
\[
\max_{x > 0} x^{-r}\exp(-sx^{-\alpha}) = \left(\frac{r}{es\alpha}\right)^{r/\alpha}.
\]
\end{lemma}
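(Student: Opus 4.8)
The plan is to reduce this to a single-variable optimization problem via the logarithmic derivative. Set $h(x) = x^{-r}\exp(-sx^{-\alpha})$ for $x > 0$. Since $h > 0$ everywhere, it is convenient to work with $\log h(x) = -r\log x - s x^{-\alpha}$. Differentiating gives
\[
\frac{h'(x)}{h(x)} = (\log h)'(x) = -\frac{r}{x} + s\alpha x^{-\alpha - 1} = \frac{1}{x}\left(s\alpha x^{-\alpha} - r\right).
\]
First I would observe that this vanishing condition $s\alpha x^{-\alpha} = r$ has the unique positive solution $x_0 = (s\alpha/r)^{1/\alpha}$, equivalently $x_0^{-\alpha} = r/(s\alpha)$.

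Next I would check that $x_0$ is indeed a global maximum, not merely a critical point. The quickest way is to inspect the sign of $(\log h)'$: the factor $1/x$ is positive, and $s\alpha x^{-\alpha} - r$ is strictly decreasing in $x$ (as $x^{-\alpha}$ is strictly decreasing for $\alpha > 0$), so it is positive for $x < x_0$ and negative for $x > x_0$. Hence $h$ is strictly increasing on $(0, x_0)$ and strictly decreasing on $(x_0, \infty)$, so $x_0$ is the unique global maximizer on $(0,\infty)$. (One also sees $h \to 0$ as $x \to 0^+$ because the exponential decay beats the polynomial blow-up, and $h \to 0$ as $x \to \infty$, consistent with an interior maximum.)

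Finally I would substitute $x_0$ back into $h$. Using $x_0^{-\alpha} = r/(s\alpha)$ we get $x_0^{-r} = (x_0^{-\alpha})^{r/\alpha} = (r/(s\alpha))^{r/\alpha}$ and $\exp(-s x_0^{-\alpha}) = \exp(-s \cdot r/(s\alpha)) = \exp(-r/\alpha) = (e^{-1})^{r/\alpha}$. Multiplying,
\[
h(x_0) = \left(\frac{r}{s\alpha}\right)^{r/\alpha} \left(\frac{1}{e}\right)^{r/\alpha} = \left(\frac{r}{es\alpha}\right)^{r/\alpha},
\]
which is the claimed value. There is no real obstacle here; the only point requiring a word of care is justifying that the interior critical point is a global maximum rather than just doing a blind Lagrange-style computation, and the monotonicity argument above handles that cleanly.
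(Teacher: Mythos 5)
Your proof is correct and follows essentially the same route as the paper, which simply states that the maximum is attained at $x = (s\alpha/r)^{1/\alpha}$ and leaves the verification to the reader; you have filled in the routine calculus (logarithmic differentiation, the sign analysis confirming a global maximum, and the substitution) correctly.
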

\begin{proof}
The maximum is reached in $x = (s\alpha/r)^{1/\alpha}$.
\end{proof}

\begin{proposition} \label{main}
The map $$P_\alpha: (0,1) \to (0,1): x \mapsto \exp(1-x^{-\alpha})$$ is $(A,e,1/\alpha)$-mild, where $A = 6\alpha$ if $\alpha \geq 1$ and $A = 3 (2/\alpha)^{1/\alpha}$ for $0 < \alpha < 1$.
\end{proposition}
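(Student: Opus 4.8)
The plan is to write $P_\alpha = \exp \circ\, u_\alpha$, where $u_\alpha(x) = 1 - x^{-\alpha}$ as in Lemma 4.5, and to apply the univariate Fa\`a di Bruno formula (Proposition 2.3) to the $n$-th derivative $P_\alpha^{(n)}(x)$. Since $\exp$ is its own derivative, each term in the Fa\`a di Bruno sum carries a factor $\exp(u_\alpha(x)) = \exp(1 - x^{-\alpha})$ together with a product $\prod_{j=1}^s \big(u_\alpha^{(l_j)}(x)\big)^{k_j} / \big(k_j! (l_j!)^{k_j}\big)$. The first step is to substitute the bound $|u_\alpha^{(l_j)}(x)| \le \alpha^{l_j} l_j! \, x^{-(\alpha+l_j)}$ from Lemma 4.5 (for $\alpha \ge 1$; the $\alpha < 1$ case uses $l_j!$ in place of $\alpha^{l_j} l_j!$). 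Using $\sum k_i = \lambda$ (so $|\lambda| = \sum |k_i| =: p$ ranges over $1 \le p \le n$) and $\sum |k_i| l_i = n$, the powers of $x$ collect to $x^{-(\alpha p + n)}$, and the $l_j!$ factors cancel neatly against the denominators, leaving a clean power of $\alpha$ and a combinatorial factor.

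The second step is to absorb the negative power of $x$ into the exponential. After factoring out $\exp(1-x^{-\alpha})$, one is left with a term of the shape $\exp(1) \cdot x^{-(\alpha p + n)} \exp\big(-(1-\tfrac12) x^{-\alpha}\big)$ if we split the exponential as $\exp(1-x^{-\alpha}) = e\cdot \exp(-\tfrac12 x^{-\alpha})\exp(-\tfrac12 x^{-\alpha})$ — actually it is cleanest to keep one full $\exp(1-x^{-\alpha}) \le e \cdot \exp(-x^{-\alpha}) $... but $\exp(1-x^{-\alpha}) \le 1$ on $(0,1)$, so in fact $P_\alpha(x) \le 1$ and we can simply bound $x^{-(\alpha p + n)} \exp(1-x^{-\alpha}) \le x^{-(\alpha p+n)} e \exp(-x^{-\alpha})$ and apply Lemma 4.6 with $r = \alpha p + n$ and $s = 1$. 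This yields
\[
x^{-(\alpha p + n)} \exp(1 - x^{-\alpha}) \le e \left(\frac{\alpha p + n}{e\alpha}\right)^{(\alpha p + n)/\alpha} = e\left(\frac{p + n/\alpha}{e}\right)^{p + n/\alpha}.
\]
Using $t^t \le e^t \, t!$-type estimates (more precisely $\big(\tfrac{p+n/\alpha}{e}\big)^{p+n/\alpha} \le (p + \lceil n/\alpha\rceil)!$ up to harmless constants, via $m! \ge (m/e)^m$), and $p \le n$, this is bounded by something of the form $e \cdot c^{n} \, (n!)^{1/\alpha}\, n!^{\,0}$-ish — the key point being that the $1/\alpha$ exponent on the factorial emerges exactly from the ratio $(\alpha p + n)/\alpha \approx n/\alpha + p$ and the fact that $p \le n$ contributes an extra honest factorial, giving total growth $\big(n!\big)^{1+1/\alpha}$ as required in Definition 2.2 with $C = 1/\alpha$.

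The third step is bookkeeping of the remaining combinatorial sum: after the substitutions, the sum over $p_s(\nu,\lambda)$ and over $\lambda$ of the quantities $\nu! \prod 1/(k_j! (l_j!)^{k_j}) \cdot (l_j!)^{k_j}$ — that is, $\sum \nu!/\prod k_j!$ — must be bounded by $c^n$ for an absolute (or $\alpha$-controlled) constant; this is a standard Bell-number / partition estimate (it is exactly the combinatorial content of Proposition 4.1 with $A_g$-type data), and one checks it gives a factor like $4^n$ or so. Combining the three pieces and carefully collecting constants is what produces $A = 6\alpha$ for $\alpha \ge 1$ and $A = 3(2/\alpha)^{1/\alpha}$ for $\alpha < 1$, with $B = e$.

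I expect the main obstacle to be exactly the point flagged in the introduction: controlling the polynomial in $1/x$, i.e. making the passage from $\sum_p x^{-(\alpha p + n)}(\text{combinatorial coeff})$ to a clean bound tight enough to get the constant $A$ linear in $\alpha$ (rather than, say, $\alpha^n$ growth or an extra $1+1/\alpha$). The delicate step is choosing how to split $\exp(1-x^{-\alpha})$ and how generously to apply Lemma 4.6 across all $p$ simultaneously, since a crude term-by-term maximization could lose a factor growing with $n$. One must maximize the $x$-dependent part \emph{before} summing over $p$, use that $P_\alpha \le 1$ to get the leading $e$ (hence $B = e$), and track that the worst term is $p = n$ (all $k_i$ of size one, $l_i = i$), which is where the stated constants are calibrated. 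The rest — Lemma 4.5, Lemma 4.6, and the convexity/root-extraction trick from Proposition 4.1 for handling the $(C+1)$-powers — slots in mechanically.
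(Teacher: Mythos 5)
Your overall strategy coincides with the paper's: write $P_\alpha=\exp\circ\,u_\alpha$, apply Fa\`a di Bruno, use Lemma \ref{umild} to collect the power $x^{-(k\alpha+n)}$, and absorb it into the exponential via Lemma \ref{expmild}. However, there is a genuine error in your bookkeeping, located exactly where the paper's proof is more careful. In your third step you claim that the residual combinatorial sum $\sum_{\lambda}\sum_s\sum_{p_s(\nu,\lambda)}\nu!/\prod_j k_j!$ is bounded by $c^n$ (``a factor like $4^n$ or so''). This is false: the single term with $s=1$, $l_1=n$, $k_1=1$ already contributes $n!$, so the sum grows at least like $n!$ (indeed, weighted by $k!$ it equals exactly $2^{n-1}n!$, by the generating-function computation behind Proposition \ref{comp}). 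Meanwhile your second step already spends a full honest $n!$ on the $x$-dependent factor, since you bound $\bigl(\tfrac{p+n/\alpha}{e}\bigr)^{p+n/\alpha}$ by roughly $c^n\,p!\,(n!)^{1/\alpha}$ and then crudely use $p\le n$ to get $p!\le n!$. Correcting your combinatorial claim to $c^n n!$ therefore leaves total growth $(n!)^{2+1/\alpha}$, i.e.\ only $(1+1/\alpha)$-mildness --- which is the weaker conclusion of Proposition \ref{weakpower}, not the statement of Proposition \ref{main}.

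The repair is to refuse to decouple the two factorials: the factor $k!=p!$ coming out of the Lemma \ref{expmild} estimate must stay attached to the inner Bell-polynomial sum indexed by $k$, and the weighted sum $\sum_k k!\cdot\bigl(\sum_{n,k}\nu!/\prod k_j!\bigr)$ must be evaluated in one stroke as the $n$-th derivative of a composition of two $0$-mild functions of geometric-series type, which is precisely what Proposition \ref{comp} provides; this yields $c^n n!$ in total rather than $n!\cdot c^n n!$. The paper engineers this cleanly by splitting $e^{1-x^{-\alpha}}=e^{1-(1/2)x^{-\alpha}}\cdot e^{-(1/2)x^{-\alpha}}$: the first half absorbs $x^{-n}$ uniformly in $k$ (Lemma \ref{expmild} with $s=1/2$, giving $e\,(2/\alpha)^{n/\alpha}(n!)^{1/\alpha}$, whence $B=e$ and the exponent $1/\alpha$), while the second half absorbs $x^{-k\alpha}$ term by term, giving $2^k k!$, which feeds together with the $\alpha^i i!$ of Lemma \ref{umild} directly into Proposition \ref{comp} to produce $(3\alpha)^n n!$; multiplying the two pieces gives $A=6\alpha$. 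Your ``use the whole exponential at once'' variant can be made to work, but only if, when splitting $(p+n/\alpha)^{p+n/\alpha}$, you keep the $p!$ inside the sum over $k$; as written, your accounting loses a factor of $n!$, and it is also unclear that it recovers the specific constant $A=6\alpha$ asserted in the statement.
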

\begin{proof}
Defining $u_\alpha(x) = 1-x^{-\alpha}$ as in Lemma \ref{umild}, we will use the Fa\`a di Bruno formula for the map $P_\alpha = \exp \circ \, u_\alpha$. This yields:
\[
P_\alpha^{(n)}(x) = \sum_{k = 1}^n \exp(u_\alpha(x)) \left( \sum_{n,k} B_n(k_1,\ldots,k_n) \prod_{i = 1}^n (u_\alpha^{(i)}(x))^{k_i}\right).
\]
The summation over $n,k$ is subject to the conditions of Proposition \ref{chain}: $\sum_{i = 1}^n ik_i = n$ and  $\sum_{i = 1}^n k_i = k$.
Thus, in the product over the derivatives of $u_\alpha$, we obtain a factor:
\[
\prod_{i = 1}^n (x^{-(\alpha+i)})^{k_i} = x^{-\sum_{i = 1}^n (\alpha+i)k_i} = x^{-(k\alpha+n)}.
\]
Therefore, we obtain:
\begin{align*}
P_\alpha^{(n)}(x) &= \sum_{k = 1}^n x^{-(k\alpha+n)}\exp(u_\alpha(x)) \left( \sum_{n,k} B_n(k_1,\ldots,k_n) \prod_{i = 1}^n (x^{\alpha+i}u_\alpha^{(i)}(x))^{k_i}\right) \\
&=  x^{-n}e^{1-(1/2)x^{-\alpha}}\sum_{k = 1}^n x^{-k\alpha}e^{-(1/2)x^{-\alpha}} \left( \sum_{n,k} B_n(k_1,\ldots,k_n) \prod_{i = 1}^n (x^{\alpha+i}u_\alpha^{(i)}(x))^{k_i}\right).
\end{align*}
We proceed by taking absolute value and using the triangle inequality. Suppose now $\alpha \geq 1$, the other case is similar. By Lemma \ref{umild} we have that:
\[
x^{\alpha+i}|u_\alpha^{(i)}(x)| \leq \alpha^i i!
\]
and by Lemma \ref{expmild} we find that:
\[
x^{-k\alpha}e^{-(1/2)x^{-\alpha}} \leq \left(\frac{2k\alpha}{e\alpha}\right)^k \leq 2^k k!
\]
where we have used the inequality $k ^k \leq e^k k!$. Thus by Proposition \ref{comp} the sum can be bounded by $(3\alpha)^n n!$. Finally, by Lemma \ref{expmild}, we have:
\[
x^{-n}e^{1-(1/2)x^{-\alpha}} \leq e \left(\frac{2n}{e\alpha} \right)^{n/\alpha} \leq e (2/\alpha)^{n/\alpha} (n!)^{1/\alpha},
\]
using that $\alpha \geq 1$ to obtain $(2/\alpha)^{1/\alpha} \leq 2$.
\end{proof}

\section{parametrizations}
In this section we prove some results that indicate that the map $P_\alpha$ can be used to construct mild parametrizations. We start with a classical example that motivates this idea. We generalize this result in Proposition \ref{power}, which yields a parametrization result for definable families of curves. Proposition \ref{weakpower} is more general, but we obtain a $(1/\alpha + 1)$-mild map in that case.

\begin{definition}[Mild parametrization]
Let $X$ be a subset of $\R^n$. A $C$-mild parametrization of $X$ is a finite collection of $C$-mild maps $\{\phi_i: (0,1)^m \to X \mid i = 1,\ldots,N\}$ such that $X$ is covered by the images of these maps. If $X_T$ is a family of subsets of $\R^n$, then a uniform $C$-mild parametrization is a finite collection of families of $C$-mild maps $\{\phi_i: T \times (0,1)^m \to X_T \mid i = 1,\ldots,N\}$. That is: there exist $A$,$B$ and $C$ such that for any $t \in T$ the maps $\phi_{i,t}$ are $(A,B,C)$-mild and $X_t$ is covered by the union of all their images.
\end{definition}

\subsection{Yomdin's example}\label{sec41}
Consider the family of hyperbolas in $(0,1)^2$ given by $xy = \epsilon^2$, with $\epsilon \in (0,1)$. There is no uniform $0$-mild parametrization for this family (see \cite[Section 3.2]{ex}). We will show that it has a $1/\alpha$-mild parametrization consisting of three charts. We will use the map $P_\alpha$ as a power substitution, similarly to the map $x \mapsto x^2$ in \cite[Lemma 4.2]{ex} and more generally $x \to x^r$ in \cite[Proposition 4.1.5]{unif}. 

First, consider the family as the graph of the map $f_\epsilon(x) = f(\epsilon,x) = \epsilon^2/x$ on the interval $\epsilon^2 < x < 1$. Now, on this interval, the first order derivative (with respect to $x$) is not uniformly bounded as $x \to \epsilon^2$. To this end we divide the domain in three parts: $\epsilon^2 < x < \epsilon$, $x = \epsilon$ and $\epsilon < x < 1$. Changing variables, the first case becomes the third case and the second case is trivial. Hence we proceed on the interval $\epsilon < x < 1$.

Let $C$ be the set $\{ (\epsilon,x) \in (0,1)^2 \mid \epsilon < x < 1\}$ and let $\tilde{C}$ be the preimage under the map $P:(0,1)^2 \to (0,1)^2: (\epsilon,x) \mapsto (P_\alpha(\epsilon),P_\alpha(x))$. Then for any $\epsilon$ and $x$ in $\tilde{C}$ we have:
\[
e^{1-\epsilon^{-\alpha}} < e^{1-x^{-\alpha}} < 1 \iff \epsilon < x < 1.
\]
We claim that for any $\epsilon$ the map $(f \circ P)_{\epsilon}$ is $1/\alpha$-mild. Moreover, the constants $A$ and $B$ do not depend on $\epsilon$ (uniform mildness). To this end, note that $$(f \circ P)(\epsilon,x) = e^{2(1-\epsilon^{-\alpha})-(1-x^{-\alpha})}.$$ We use the same approach as in the proof of the mildness of $P_\alpha$. The $n$-th derivative with respect to $x$ can be bounded in absolute value by:
\[
|(f \circ P)_{\epsilon}^{(n)}(x)| \leq \sum_{k = 1}^n e^{2(1-\epsilon^{-\alpha})-(1-x^{-\alpha})} \left( \sum_{n,k} B_n(k_1,\ldots,k_n) \prod_{i = 1}^n (|u_\alpha^{(i)}(x)|)^{k_i}\right).
\]
Writing the exponent as $2(1-\epsilon^{-\alpha}) - 2(1-x^{-\alpha}) + (1-x^{-\alpha})$ and using that $e^{(1-\epsilon^{-\alpha})} < e^{-(1-x^{-\alpha})}$, bounding the above sum completely reduces to the proof of Proposition \ref{main}. (Here we really used that we bounded the first order derivative.) Finally, one should revert the transformation in $\epsilon$ and map $(0,1)$ onto the domain of the obtained function. More precisely, denote  $\Gamma(f_\epsilon)$ for the graph of $f_\epsilon$, then we get:
\[
\phi: (0,1) \to \Gamma(f_\epsilon) : x \mapsto ( (1-P^{-1}_\alpha(\epsilon))x+P^{-1}_\alpha(\epsilon), \epsilon e^{-(1-x^{-\alpha})}).
\]
Note that we do not need the power substitution in $\epsilon$, as we will show in Theorem \ref{curvepara}.

\subsection{Results on compositions}\label{sec3}
We would like to generalize this example, using the results of \cite{unif}. Their pre-parametrization result (Theorem 4.3.1) leads to a uniform $C^r$-parametrization by using the power substitution $P_r: x \to x^r$. More precisely, given a map $\phi$ obtained by this result, the composition $\phi \circ P_r$ is mild (up to order $r$). Using the map $P_\alpha$, it might be possible to construct a uniform $1/\alpha$-mild parametrization, which would improve bounds on the number of rational points of bounded heights on sets that we can parametrize. We will show that $\phi \circ P_\alpha$ is $1/\alpha$-mild. This yields a uniform $1/\alpha$-mild parametrization for power-subanalytic curves.

We start with a more general class of functions defined in \cite{unif}, which also become mild after composing with $P_\alpha$.

\begin{definition}[Weakly mild functions]
Let $f \in C^\infty(U,\R)$, $A,B \in \R_{>0}$ and $C \in \R_{\geq 0}$. We say that $f$ is weakly $(A,B,C)$-mild if for any $x \in U$ and $\nu \in \N^m$:
\[
|f^{|\nu|}(x)| \leq \frac{B A^{|\nu|}(|\nu|!)^{C+1}}{x^\nu}.
\]
\end{definition}

The next result is similar to \cite[Proposition 4.1.5]{unif}. Intuitively, near $0$, $x \to x^r$ is a vanishing function of order $r$, thus is capable of rendering functions mild up to order $r$. In our case, this will be up to order $+\infty$. In the proof of the next proposition, we use that $\alpha \geq 1$, but clearly, it is also true for $0 < \alpha < 1$.

\begin{proposition}\label{weakpower}
Let $f:U \to \R$, $U \subset (0,1)$ open, be weakly $(A,B,0)$-mild and suppose that $f^{(1)}$ is weakly $(A,B,0)$-mild. Then the map $f \circ P_\alpha: P_\alpha^{-1}(U) \to \R$ is $(1+1/\alpha)$-mild.
\end{proposition}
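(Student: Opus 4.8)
The plan is to apply the Fa\`a di Bruno formula (Proposition \ref{chain}) to $f\circ P_\alpha$, with $f$ as outer and $P_\alpha=\exp\circ\, u_\alpha$ as inner function, and to exploit the following cancellation. Near $0$ the derivatives of $f$ grow like $P_\alpha(x)^{-k}$ and the derivatives of $P_\alpha$ blow up; however each factor $P_\alpha^{(i)}(x)$ carries the decaying weight $\exp(u_\alpha(x))=P_\alpha(x)$, and these weights almost balance the growth coming from $f$, leaving a single decaying factor $P_\alpha(x)=e^{1-x^{-\alpha}}$ that is then controlled by Lemma \ref{expmild} much as in the proof of Proposition \ref{main}. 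The hypothesis that $f^{(1)}$ (and not merely $f$) is weakly mild is exactly what makes the leftover equal to \emph{one} factor $P_\alpha(x)$ rather than none, and so is what makes the argument run; the exponent $1+1/\alpha$ (rather than $1/\alpha$) then comes out of the power of $x$ that Lemma \ref{expmild} has to absorb.

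Concretely, I would first write, via Proposition \ref{chain} and the triangle inequality (the coefficients $B_n(k_1,\ldots,k_n)$ being nonnegative),
\[
|(f\circ P_\alpha)^{(n)}(x)| \leq \sum_{k=1}^{n} |f^{(k)}(P_\alpha(x))| \sum_{n,k} B_n(k_1,\ldots,k_n) \prod_{i=1}^{n} |P_\alpha^{(i)}(x)|^{k_i},
\]
where the inner sum is over the tuples with $\sum_i k_i=k$ and $\sum_i ik_i=n$. For $k\geq 1$, the weak $(A,B,0)$-mildness of $f^{(1)}$ gives $|f^{(k)}(P_\alpha(x))|\leq BA^{k-1}(k-1)!\,P_\alpha(x)^{-(k-1)}$. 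For the factors $P_\alpha^{(i)}(x)$ I would expand $P_\alpha=\exp\circ\, u_\alpha$ once more by Proposition \ref{chain} (noting that every term carries the common factor $\exp(u_\alpha(x))=P_\alpha(x)$), apply Lemma \ref{umild}, and use $x^{-\alpha l}\leq x^{-\alpha i}$ for $l\leq i$ and $x\in(0,1)$, which yields an estimate of the form
\[
|P_\alpha^{(i)}(x)| \leq P_\alpha(x)\, x^{-i(1+\alpha)}\, C_i,
\]
where $C_i$ is the corresponding partial Bell constant; an elementary computation (or Proposition \ref{comp}) gives $C_i\leq(2\alpha)^i i!$.

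Substituting both estimates and using $\sum_i k_i=k$ and $\sum_i ik_i=n$, all but one power of $P_\alpha(x)$ collapse:
\[
|f^{(k)}(P_\alpha(x))| \prod_{i=1}^{n} |P_\alpha^{(i)}(x)|^{k_i} \leq BA^{k-1}(k-1)!\; P_\alpha(x)\, x^{-n(1+\alpha)} \prod_{i=1}^{n} C_i^{k_i}.
\]
Here $P_\alpha(x)\,x^{-n(1+\alpha)}=e\cdot x^{-n(1+\alpha)}e^{-x^{-\alpha}}$, which by Lemma \ref{expmild} (with $r=n(1+\alpha)$, $s=1$) together with $n^n\leq e^n n!$ is at most $e\,\bigl((1+1/\alpha)^{1+1/\alpha}\bigr)^{n}(n!)^{1+1/\alpha}$; this is where the exponent $1+1/\alpha$ enters. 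It then remains to bound the purely combinatorial quantity $\sum_{k=1}^{n} BA^{k-1}(k-1)!\sum_{n,k} B_n(k_1,\ldots,k_n)\prod_i C_i^{k_i}$. Using $C_i\leq(2\alpha)^i i!$, the identity $B_n(k_1,\ldots,k_n)\prod_i(i!)^{k_i}=n!/\prod_i k_i!$, the generating-function identity $\sum_{(k_i)} 1/\prod_i k_i!=\binom{n-1}{k-1}/k!$ over the relevant tuples (which also yields the bound on $C_i$), and the binomial theorem, this is at most $B\,(2\alpha(1+A))^{n}\,n!$. Putting everything together gives $|(f\circ P_\alpha)^{(n)}(x)|\leq eB\,\bigl(2\alpha(1+A)(1+1/\alpha)^{1+1/\alpha}\bigr)^{n}(n!)^{2+1/\alpha}$ for $n\geq 1$, while $|f(P_\alpha(x))|\leq B$ for $n=0$; hence $f\circ P_\alpha$ is $(1+1/\alpha)$-mild.

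The delicate point is the collapsing of the powers of $P_\alpha(x)$: one has to check that the single negative power $P_\alpha(x)^{-(k-1)}$ furnished by $f^{(1)}$ is matched exactly by the $k=\sum_i k_i$ positive powers produced by the $P_\alpha^{(i)}(x)$, so that exactly one decaying factor survives, and that it is the full power $x^{-n(1+\alpha)}$ --- rather than $x^{-n}$ as in Proposition \ref{main} --- that is fed to Lemma \ref{expmild}. This extra power $x^{-\alpha n}$, which is inherent in $|u_\alpha^{(m)}(x)|\asymp x^{-(\alpha+m)}$, is exactly what costs one additional $n!$ and replaces the $1/\alpha$ of Proposition \ref{main} by $1+1/\alpha$; I do not expect this route to do better. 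The remaining combinatorial estimate is routine modulo the generating-function identity above.
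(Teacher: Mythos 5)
Your proposal is correct and follows essentially the same route as the paper's proof: Fa\`a di Bruno applied to $f\circ P_\alpha$, the estimate $|P_\alpha^{(i)}(x)|\leq e^{1-x^{-\alpha}}x^{-(\alpha+1)i}(2\alpha)^i i!$ obtained from the inner expansion, the collapse of the powers of $P_\alpha(x)$ to a single surviving factor $e^{1-x^{-\alpha}}$ thanks to the weak mildness of $f^{(1)}$, and Lemma \ref{expmild} applied to $x^{-(\alpha+1)n}e^{1-x^{-\alpha}}$ to produce the exponent $1+1/\alpha$. The only differences are cosmetic: you track slightly sharper constants and make the combinatorial (Bell-polynomial/Lah-number) bound explicit where the paper simply invokes Proposition \ref{comp}.
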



\begin{proof}
Of course, we will use the Fa\`a di Bruno formula and the triangle inequality again. This leaves us with the task to bound:
\[
|(f \circ P_\alpha)^{(n)}(x)| \leq \sum_{k = 1}^n \frac{BA^kk!}{e^{(k-1)(1-x^{-\alpha})}} \left( \sum_{n,k} B_n(k_1,\ldots,k_n) \prod_{i = 1}^n (|P_\alpha^{(i)}(x)|)^{k_i}\right).
\]
Here, we have $k-1$ in the denominator because of the additional assumption on $f^{(1)}$. From the proof of Proposition \ref{main}, we know that:
\begin{align*}
|P_\alpha^{(i)}(x)| &= \left|\sum_{l = 1}^i x^{-(l\alpha+i)}e^{1-x^{-\alpha}} \left( \sum_{i,l} B_i(l_1,\ldots,l_i) \prod_{j = 1}^i (x^{\alpha+j}u_\alpha^{(j)}(x))^{l_j}\right)\right| \\
&\leq x^{-(\alpha+1)i}e^{1-x^{-\alpha}}\sum_{l = 1}^i \left( \sum_{i,l} B_i(l_1,\ldots,l_i) \prod_{j = 1}^i (x^{\alpha+j}|u_\alpha^{(j)}(x)|)^{l_j}\right).
\end{align*}
The sum over $l$ can be considered as the composition of an $(1,1,0)$-mild map and a $(\alpha,1,0)$-mild map, yielding a $(2\alpha,1,0)$-mild map. Using the relations for the sum over $n,k$ and Lemma \ref{expmild} we get:
\begin{align*}
|(f \circ P_\alpha)^{(n)}(x)| &\leq \sum_{k = 1}^n \frac{BA^kk!}{e^{(k-1)(1-x^{-\alpha})}}x^{-(\alpha+1)n}e^{k(1-x^{-\alpha})} \left( \sum_{n,k} B_n(k_1,\ldots,k_n) \prod_{i = 1}^n ((2\alpha)^ii!)^{k_i}\right) \\
&=  x^{-(\alpha+1)n}e^{1-x^{-\alpha}}\sum_{k = 1}^n BA^kk!\left( \sum_{n,k} B_n(k_1,\ldots,k_n) \prod_{i = 1}^n ((2\alpha)^ii!)^{k_i}\right) \\
&\leq e\left(\frac{(\alpha+1)n}{e\alpha}\right)^{(\alpha+1)n/\alpha} B(2\alpha(A+1))^nn! \\
&\leq eB\left(\frac{\alpha+1}{\alpha}\right)^{(\alpha+1)n/\alpha}(2\alpha(A+1))^n (n!)^{2+1/\alpha}.
\end{align*}
\end{proof}

Following the strategy of the proof of \cite[Proposition 4.1.5]{unif} for several variables, will also yield this result for weakly mild functions in more variables. Now this result is not very satisfying and it seems that it cannot be improved in general. For $\alpha = 1$ it is equal to the bound in \cite[Lemma 77]{ccs}.

However, if $f$ is of a specific form, we can improve it. The maps we consider are the maps obtained from the pre-parametrization result of \cite{unif}. Our motivating example falls within this scope. The key observation is that these maps have a good interaction with $P_\alpha$. Indeed, the main obstruction to a better result in the proof above is that we completely factor out a power of $x$ in the estimate of $|P^{(i)}_\alpha(x)|$. We will not have to do this below. Let us be more precise about which maps are given by this parametrization result.

\begin{definition}[a-b-m functions]\label{abmdef}
A map $b:U \subset (0,1)^m \to \R$ is bounded-monomial if it is either identically zero, or is of the form $x^\mu$ for some $\mu \in \R^m$ and its image is bounded. A map $b: U \to \R^n$ is bounded-monomial if all of its component functions are. A map $f: U \to \R$ is analytic-bounded-monomial (a-b-m) if $f$ is of the from
\[
f(x) = b_j(x)F(b(x))
\]
where $b$ is a bounded-monomial map, $b_j$ a component function of $b$ and $F$ is a unit on $\im(b)$, that is: it is analytic and non-vanishing on an open neighborhood of the topological closure of $\im(b)$.
\end{definition}

It is shown in \cite[Corollary 4.2.4]{unif} that if all partial derivatives of the associated bounded-monomial map $b$ of $f$ are bounded, then $f$ and all its first order derivatives are weakly $0$-mild. Thus one could apply the previous proposition to this class of functions. In the next proposition, we use that $\alpha \geq 1$ in the proof and the statement, but it is straightforward to deduce an analogue for $0 < \alpha < 1$. One just has to compute the constant $A$.

\begin{proposition} \label{power}
Suppose that $b(x) = x^{\mu}$ is a bounded-monomial map whose first order derivatives are bounded on an open $U \subset (0,1)^m$. Let $P: (0,1)^m \to (0,1)^m$ be given by
\[
P(x_1,\ldots,x_m) = (P_\alpha(x_1),\ldots,P_\alpha(x_m)).
\]
Then $b \circ P: P^{-1}(U) \to \R$ is $(A,B,1/\alpha)$-mild with $A = 2\alpha(2mN+1)$ with $N = \max_{i = 1}^m |\mu_i|$, $B = e^{|\mu|}M^2$ and $M$ a constant that depends on $b$.
\end{proposition}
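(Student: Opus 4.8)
The plan is to mimic the proof of Proposition \ref{main}, but to be more economical about the powers of $x$ that we factor out. First I would write $b \circ P$ explicitly. Since $b(x) = x^\mu = \prod_i x_i^{\mu_i}$, we have $(b\circ P)(x) = \prod_{i=1}^m P_\alpha(x_i)^{\mu_i} = \exp\!\big(\sum_{i=1}^m \mu_i(1-x_i^{-\alpha})\big)$. So $b\circ P$ is again of the form $\exp\circ\, v$ where $v(x) = \sum_i \mu_i u_\alpha(x_i)$ is a sum of one-variable functions, each a scalar multiple of $u_\alpha$. The multivariate Fa\`a di Bruno formula (Proposition \ref{chain}) applied to $\exp \circ\, v$ then expresses $(b\circ P)^{(\nu)}(x)$ as $\exp(v(x))$ times a sum over the combinatorial data $p_s(\nu,\lambda)$ of products of powers of derivatives $v^{(l_j)}(x)$; crucially $v^{(l)}(x) = \mu_i u_\alpha^{(|l|)}(x_i)$ when $l$ is supported on the single coordinate $i$ (and $v^{(l)} = 0$ if $\mathrm{supp}(l)$ has more than one element, since $v$ separates variables), so only ``single-variable'' multi-indices $l$ contribute, and $|v^{(l)}(x)| \le N\, x_i^{-(\alpha+|l|)}\alpha^{|l|}|l|!$ by Lemma \ref{umild}, with $N = \max_i|\mu_i|$.

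Next I would carry out the power-of-$x$ bookkeeping coordinate by coordinate. Using the constraint $\sum_j |k_j| l_j = \nu$ from Proposition \ref{chain}, for each fixed coordinate $i$ the total power of $x_i$ produced by the $u_\alpha$-derivatives is $x_i^{-(\alpha \sum k_j^{(i)} + \nu_i)}$ where the sum of $|k_j|$ over the $l_j$ supported at $i$ is some $k^{(i)} \le |\lambda|$. I would then write $\exp(v(x)) = \exp\!\big(\tfrac12\sum\mu_i(1-x_i^{-\alpha})\big)\cdot\exp\!\big(\tfrac12\sum\mu_i(1-x_i^{-\alpha})\big)$ exactly as in Proposition \ref{main}, reserving one half of the exponential (there is a sign subtlety here: since $\mathrm{im}(b)$ is bounded, $x^\mu$ is bounded, but the exponents $\mu_i$ need not be positive; however $\sum_i \mu_i(1-x_i^{-\alpha}) = \log(b\circ P)(x)$ is bounded above by $\log M$ for some constant $M$ depending on $b$, and also each partial sum can be controlled because $b$ bounded-monomial means $\mathrm{im}(b)$ bounded — I'd split into the coordinates where $\mu_i>0$ and where $\mu_i<0$ and use that for $\mu_i<0$, $x_i^{-\alpha}$ is itself bounded on $U$ by boundedness of $b$ composed with the relevant projections, absorbing those factors into the constant $M$). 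The half I keep, $\exp\!\big(-\tfrac12\sum_{\mu_i>0}\mu_i x_i^{-\alpha}\big)$, is then used against the powers $\prod_i x_i^{-\alpha k^{(i)}}$ via Lemma \ref{expmild} applied in each coordinate, giving a bound of the shape $\prod_i (C k^{(i)})^{k^{(i)}} \le$ (constant)$^{|\lambda|} (|\lambda|!)$ using $k^k\le e^k k!$ and $\sum_i k^{(i)} = |\lambda|$. The leftover clean half of the exponential, together with the surviving $\prod_i x_i^{-\nu_i} = x^{-\nu}$, is estimated by Lemma \ref{expmild} once more in each coordinate to yield the $(|\nu|!)^{1/\alpha}$ factor and the $M$-type constant raised to a power; this is where the $e^{|\mu|}$ in $B$ comes from (the ``$1$'' in each $1-x_i^{-\alpha}$).

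Then the combinatorial sum that remains — the sum over $p_s(\nu,\lambda)$ of $\nu!\prod 1/(k_j!(l_j!)^{k_j})$ weighted against $f^{(\lambda)} = \exp^{(|\lambda|)}$ bounds and the $(\mathrm{const})^{|\lambda|}(|\lambda|)!$ factors above — is bounded exactly as in Proposition \ref{comp}: it is the composition bound for a $(1, 1, 0)$-mild outer function (the exponential, for which $\exp^{(k)}(v(x)) = \exp(v(x)) \le M$ pulled out front) with an inner $(2\alpha(\cdot), 1, 0)$-mild-type datum, coming from the single-variable derivative bounds $|u_\alpha^{(l)}| \le N\alpha^l l! \cdot x^{-(\cdot)}$ after the $x$-powers have been stripped — actually the inner bound here is $(2\alpha(\cdot))^l l!$ because each coordinate can be hit and there are $m$ coordinates and an extra factor $N$, which is the source of $A = 2\alpha(2mN+1)$. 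Collecting the pieces gives $|(b\circ P)^{(\nu)}(x)| \le e^{|\mu|} M^2 \, (2\alpha(2mN+1))^{|\nu|} (|\nu|!)^{1/\alpha+1}$, i.e. $(A,B,1/\alpha)$-mildness with the stated $A$ and $B$.

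The main obstacle I expect is the sign/boundedness issue in splitting the exponential: in Proposition \ref{main} the exponent $1-x^{-\alpha}$ is manifestly bounded above and its negative part is exactly what one exploits, but here $\sum_i \mu_i(1-x_i^{-\alpha})$ mixes coordinates with $\mu_i$ of both signs, so one cannot simply reserve ``half of each negative term''. The fix is to use the defining property of a bounded-monomial map — $\mathrm{im}(b)$ is bounded — which forces, for each $i$ with $\mu_i<0$, the quantity $x_i^{-\mu_i}$ (hence $x_i^{-\alpha}$ up to taking powers) to stay bounded on $U$, so those coordinates contribute only bounded factors absorbed into $M$, and the genuine competition between decaying exponentials and blowing-up powers of $x_i$ happens only in the coordinates with $\mu_i>0$, exactly as in Proposition \ref{main}. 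A secondary bookkeeping nuisance is keeping the constant $M$ and the factor $N=\max_i|\mu_i|$ propagated correctly through both applications of Lemma \ref{expmild} and the composition estimate of Proposition \ref{comp}; but this is routine once the structure above is in place.
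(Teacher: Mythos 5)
Your overall architecture matches the paper's: write $b\circ P=\exp\circ\,v$ with $v(x)=\sum_i\mu_i(1-x_i^{-\alpha})$, apply the multivariate Fa\`a di Bruno formula, split the exponential into two halves, play one half against the negative powers of $x$ via Lemma \ref{expmild}, and close with the composition estimate of Proposition \ref{comp}. But the step where you handle the signs of the $\mu_i$ has a genuine gap. You claim that for each $i$ with $\mu_i<0$ the factor $x_i^{\mu_i}$ stays bounded (equivalently, that $x_i^{-\alpha}$ is bounded on the relevant domain) ``by boundedness of $b$ composed with the relevant projections''. This is false: boundedness of the product $x^\mu$ does not bound the individual factors. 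For instance $b(x_1,x_2)=x_1^4x_2^{-1/2}$ on $U=\{(x_1,x_2)\in(0,1)^2 \mid x_1^2<x_2\}$ is bounded with bounded first-order partials, yet $x_2$ ranges over all of $(0,1)$ on $U$, so $x_2^{-1/2}$ is unbounded there. Moreover, even granting that claim, your coordinate-by-coordinate use of Lemma \ref{expmild} breaks down exactly in the coordinates with $\mu_i<0$: the powers $x_i^{-\nu_i}$ and $x_i^{-\alpha k^{(i)}}$ still blow up there, but the half-exponential $e^{-\mu_ix_i^{-\alpha}/2}$ in such a coordinate is \emph{growing}, so there is nothing left to absorb them.

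The paper's resolution is different, and it is precisely where the hypothesis on the first-order derivatives actually enters. One does not work coordinate by coordinate: letting $x_I=\min_ix_i$, one bounds $x^{-\alpha\lambda}\leq x_I^{-\alpha|\lambda|}$ and $x^{-\nu}\leq x_I^{-|\nu|}$, peels off a single factor $e^{-(1/2)x_I^{-\alpha}}$ from $e^{-(1/2)\sum_i\mu_ix_i^{-\alpha}}$ (replacing $\mu_I$ by $\mu_I'=\mu_I-1$), and applies Lemma \ref{expmild} in the one variable $x_I$ to produce the $(2|\lambda|/e)^{|\lambda|}$ and $(2|\nu|/(e\alpha))^{|\nu|/\alpha}$ factors. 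The leftover exponential $K=e^{-(1/2)\sum_i\mu_i'x_i^{-\alpha}}$ satisfies $e^{|\mu'|}K^2=((\partial/\partial x_I)(b)/\mu_I)\circ P$, which is bounded by hypothesis, whatever the signs of the $\mu_i$. Your proposal never makes this identification, so it never genuinely uses the boundedness of the derivatives, and without it the estimate cannot be completed.
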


\begin{proof}
We use the same strategy as in the proof of Proposition \ref{main}: using the multivariate Fa\`a di Bruno formula. We write 
\[
(b \circ P)(x) = e^{\sum_{i = 1}^m\mu_i(1-x_i^{-\alpha})} = (E \circ u_\alpha)(x_1,\ldots,x_m)
\]
where $E = \exp(x_1+\ldots+x_m)$ and $u_\alpha = (u^1_\alpha(x_1),\ldots,u^m_\alpha(x_m))$ where $u^i_\alpha(x_i) = \mu_i(1-x_i^{-\alpha})$. We may suppose that all $\mu_i$ are nonzero, otherwise the map does not depend on $x_i$ and any derivative with respect to $x_i$ is trivial. We have that:
\[
(E \circ u_\alpha)^{(\nu)}(x) = \sum_{1 \leq |\lambda| \leq n} E^{(\lambda)}(u_\alpha(x)) \sum_{s = 1}^n \sum_{p_s(\nu,\lambda)} \nu! \prod_{j = 1}^s \frac{(u_\alpha^{(l_j)}(x))^{k_j}}{k_j!(l_j!)^{k_j}}.
\]
As in the proof of Proposition \ref{main}, we rewrite the formula as follows:
\[
(E \circ u_\alpha)^{(\nu)}(x) = \sum_{1 \leq |\lambda| \leq n} \frac{1}{x^{\nu+\alpha\lambda}}E^{(\lambda)}(u_\alpha(x)) \sum_{s = 1}^n \sum_{p_s(\nu,\lambda)} \nu! x^{\nu+\alpha\lambda}\prod_{j = 1}^s \frac{(u_\alpha^{(l_j)}(x))^{k_j}}{k_j!(l_j!)^{k_j}},
\]
which equals
\[
e^{|\mu|}\frac{1}{x^\nu}e^{-(1/2)\sum_{i = 1}^m \mu_ix_i^{-\alpha}}  \sum_{1 \leq |\lambda| \leq n} \frac{1}{x^{\alpha\lambda}}e^{-(1/2)\sum_{i = 1}^m \mu_ix_i^{-\alpha}}  \sum_{s = 1}^n \sum_{p_s(\nu,\lambda)} \nu! x^{\nu+\alpha\lambda}\prod_{j = 1}^s \frac{(u_\alpha^{(l_j)}(x))^{k_j}}{k_j!(l_j!)^{k_j}}.
\]
Taking the extra factor $x^{\nu+\alpha\lambda}$ into account, using the relations on $l_j$ and $k_j$, we can bound everything concerning $u_\alpha$ by the appropriate bounds of an $(\alpha,N,0)$-mild function (see Lemma \ref{umild}). Now let $I$ be such that $x_I = \min_{i = 1}^m x_i$. Let $\mu' \in \R^m$ be given by $\mu'_i = \mu_i$ for $i \neq I$ and $\mu'_I = \mu_I - 1$. Then we have:
\[
\frac{1}{x^{\alpha\lambda}}e^{-(1/2)\sum_{i = 1}^m \mu_ix_i^{-\alpha}} \leq \frac{1}{x_I^{\alpha|\lambda|}}e^{-(1/2)x_I^{-\alpha}}e^{-(1/2)\sum_{i = 1}^m \mu'_ix_i^{-\alpha}} \leq \left(\frac{2|\lambda|}{e}\right)^{|\lambda|} e^{-(1/2)\sum_{i = 1}^m \mu'_ix_i^{-\alpha}}
\]
by Lemma \ref{expmild}. Now denote $K = e^{-(1/2)\sum_{i = 1}^m \mu'_ix_i^{-\alpha}}$, then $e^{|\mu'|}K^2 = ((\partial / \partial x_I)(b)/\mu_I) \circ P$. By our assumption on the partial derivatives of $b$, taking the maximum, this can be bounded independent of $I$. Thus we find that $K < M$ for some $M$ depending on $b$ only.

Similarly we find that
\[
\frac{1}{x^\nu}e^{-(1/2)\sum_{i = 1}^m \mu_ix_i^{-\alpha}} \leq \left(\frac{2|\nu|}{e\alpha}\right)^{|\nu|/\alpha}M.
\]
Using the multivariate version of Proposition \ref{comp} (see \cite[Corollary 2.5.1]{para}) we obtain that for any $x \in U$:
\[
|(b \circ P)^{(\nu)}(x)| \leq e^{|\mu|}M^2 (2/\alpha)^{|\nu|/\alpha}(\alpha(1+m2N))^{|\nu|} (|\nu|!)^{1+1/\alpha}.
\]
\end{proof}

As we have mentioned in the example, the fact that all first order partial derivatives of $b$ are bounded is crucial. Not only for the proof, but the statement can fail if it is not bounded. Indeed, if one does not bound the first order derivative in the example of Section \ref{sec41}, one would obtain the relation $e^{2(1-\epsilon^{-\alpha})} < e^{1-x^{-\alpha}}$, which is not sufficient to bound $e^{-2(1-x^{-\alpha})}$.

\begin{theorem}\label{abmmild}
Suppose that $f: U \to \R$ is a-b-m on an open $U \subset (0,1)^m$ such that the first order derivatives of its associated bounded-monomial map $b$ are bounded. Then $f \circ P: P^{-1}(U) \to \R$ is $1/\alpha$-mild.
\end{theorem}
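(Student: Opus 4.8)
The plan is to reduce the statement to Proposition \ref{power} via the composition property of mild functions, Proposition \ref{comp}. Recall that $f$ being a-b-m means $f(x) = b_j(x)F(b(x))$ where $b$ is a bounded-monomial map all of whose first order partials are bounded, $b_j$ is a component of $b$, and $F$ is a unit (analytic and non-vanishing) on an open neighborhood of $\overline{\im(b)}$. Composing with $P$, we get $(f\circ P)(x) = (b_j\circ P)(x)\cdot(F\circ b\circ P)(x)$, so it suffices to show each of the two factors $b_j\circ P$ and $F\circ b\circ P$ is $1/\alpha$-mild, since the product of two $1/\alpha$-mild maps is again $1/\alpha$-mild (this is the multivariate analogue of Proposition \ref{comp}, or follows directly by the Leibniz rule together with the bound on factorials $|\nu|! \leq 2^{|\nu|}\lambda!\,(\nu-\lambda)!$ summed over sub-multi-indices, using convexity of $t\mapsto t^{1/\alpha+1}$).

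First I would handle the factor $b_j\circ P$. If $b_j$ is identically zero there is nothing to do; otherwise $b_j(x) = x^{\mu^{(j)}}$ is itself a bounded-monomial map, and by hypothesis its first order partials are bounded (they are among those of $b$). Hence Proposition \ref{power} applies directly to $b_j$ and yields that $b_j\circ P$ is $(A,B,1/\alpha)$-mild for the explicit constants stated there. Next, for the factor $F\circ b\circ P$, I would first apply Proposition \ref{power} componentwise to $b$ to conclude that $b\circ P: P^{-1}(U)\to\R^n$ is $(A,B,1/\alpha)$-mild, with $A = 2\alpha(2mN+1)$ and $B = e^{|\mu|}M^2$ in the notation there (taking $N$ and $M$ as the maxima over the finitely many component functions). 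Then, since $F$ is a unit on an open neighborhood $V$ of the compact set $\overline{\im(b)}$, it is analytic there, hence $0$-mild on $V$: by compactness of $\overline{\im(b)}\subset V$ all its partial derivatives satisfy Cauchy-type estimates $|F^{(\lambda)}(y)|\leq B_F A_F^{|\lambda|}\lambda!$ uniformly for $y$ in a slightly smaller neighborhood still containing $\overline{\im(b)}$, so $F$ is $(A_F,B_F,0)$-mild on that neighborhood. Now $F\circ(b\circ P)$ is the composition of a $0$-mild map with a $1/\alpha$-mild map, so by the multivariate version of Proposition \ref{comp} (see \cite[Corollary 2.5.1]{para}), together with the remark following it that a composition of a $C_1$-mild and a $C_2$-mild map is $\max(C_1,C_2)$-mild, the composite $F\circ b\circ P$ is $1/\alpha$-mild.

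Combining the two, $(f\circ P)(x) = (b_j\circ P)(x)(F\circ b\circ P)(x)$ is a product of $1/\alpha$-mild maps, hence $1/\alpha$-mild, which is the claim.

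\textbf{Main obstacle.} The genuinely substantive input is Proposition \ref{power}, whose proof already did the hard analytic work of controlling the polynomials in $1/x$ arising from the Fa\`a di Bruno expansion of $b\circ P$ using the boundedness of the first order partials of $b$; here I am only assembling it. The one point that needs a little care is the passage from ``$F$ is a unit on a neighborhood of $\overline{\im(b)}$'' to ``$F$ is $(A_F,B_F,0)$-mild on a neighborhood of $\overline{\im(b)}$ that still contains $\im(b\circ P)$'': one must observe that $\im(b\circ P)\subset\im(b)\subset\overline{\im(b)}$, pick a neighborhood $V$ on which $F$ is analytic, and then use compactness of $\overline{\im(b)}$ to extract uniform Cauchy estimates on an intermediate neighborhood $V'$ with $\overline{\im(b)}\subset V'\Subset V$; this is routine but is the step where the hypotheses on $F$ are actually used. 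Everything else is bookkeeping with the composition and product closure properties already established.
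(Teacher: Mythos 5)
Your proposal is correct and follows essentially the same route as the paper: factor $f\circ P = (b_j\circ P)\cdot(F\circ b\circ P)$, apply Proposition \ref{power} to the bounded-monomial part, note that the unit $F$ is $0$-mild by analyticity on a neighborhood of the compact set $\overline{\im(b)}$ (the paper cites \cite[Proposition 2.2.10]{primer} for the Cauchy-estimate argument you spell out), and close with the composition remark after Proposition \ref{comp} and the product closure of $C$-mild functions. You merely make explicit some details the paper delegates to references.
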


\begin{proof}
This follows by Proposition \ref{power} and the fact that the unit $F$ is $0$-mild since it is analytic on an open neighborhood of $\overline{\text{Im}(b)}$ \cite[Proposition 2.2.10]{primer}. This concludes the proof by the remark below Proposition \ref{comp} and the fact the the product of $C$-mild functions is $C$-mild.
\end{proof}


Before moving to our final result, let us point out that by the method above, one can show that the parametrization of the exponential-algebraic surface on the last line of \cite[p. 507]{countpointsexp} is $1/g$-mild (improving $1+1/g$). This improves the upper bound on rational points in that paper to $c(\epsilon)(\log T)^{35+\epsilon}$.

We will now use the pre-parametrization result of \cite[Theorem 3.5]{para}, which is a slight modification of \cite[Theorem 4.3.1]{unif}, to obtain a uniform $1/\alpha$-mild parametrization of a family of power-subanalytic curves. This improves the $2$-mild parametrization of subanalytic curves of \cite{ccs}.

\begin{theorem}\label{curvepara}
If $X_T$ is a power-subanalytic family of power-subanalytic curves in $(0,1)^n$, then it has a uniform $1/\alpha$-mild parametrization.
\end{theorem}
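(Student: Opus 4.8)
The plan is to reduce the theorem to Theorem \ref{abmmild} via the pre-parametrization result of \cite[Theorem 3.5]{para}. First I would recall what that pre-parametrization gives: for the power-subanalytic family $X_T$ of curves in $(0,1)^n$, there is a finite collection of maps $\psi_i : T \times (0,1) \to X_T$, uniformly in $t$, such that the images of the $\psi_{i,t}$ cover $X_t$ for every $t$, and such that each component of $\psi_i$ is (uniformly in $t$) an a-b-m function whose associated bounded-monomial map $b_i$ has \emph{bounded first order partial derivatives}. This is precisely the hypothesis needed to invoke Theorem \ref{abmmild}. The one subtlety is that the pre-parametrization is stated for a single function or for charts of a fixed set; here I need the \emph{family} version, with the parameter $t \in T$ carried along. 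I would handle this by treating $t$ as extra variables: the family $X_T$ sits inside $(0,1)^{\dim T + n}$ and one pre-parametrizes the total space, noting that the a-b-m form and the boundedness of the derivatives of $b$ are uniform in $t$ because the pre-parametrization theorem produces finitely many data with uniform constants. (Strictly, one wants the result for curves in the fibre direction, so the relevant $b$ is a monomial in the $(0,1)$-chart variable with coefficients/exponents varying over finitely many possibilities as $t$ ranges over $T$; boundedness of the image and of the first derivative is then automatic and uniform.)

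Next I would compose each chart with the power substitution. Set $P : (0,1) \to (0,1)$, $P = P_\alpha$, in the single chart variable (as in Section \ref{sec41}, no substitution is needed in the $t$-variables, since after pre-parametrization the dependence on $t$ is already mild — indeed analytic — with uniform constants). For each $i$, consider $\phi_i = \psi_i \circ (\id_T \times P) : T \times P_\alpha^{-1}((0,1)) \to X_T$. The domain $P_\alpha^{-1}((0,1))$ is an open subinterval of $(0,1)$, which after an affine rescaling I may take to be $(0,1)$ itself; affine maps are $0$-mild, hence harmless. By Theorem \ref{abmmild}, applied to each component function of $\psi_{i,t}$ (which is a-b-m with associated bounded-monomial map having bounded first derivatives), each component of $\phi_{i,t}$ is $1/\alpha$-mild, and by inspection of the proof of Proposition \ref{power} the constants $A$ and $B$ depend only on the finitely many monomial data and on the unit $F$, hence are uniform in $t$. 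Therefore the finite collection $\{\phi_i\}$ is a uniform $1/\alpha$-mild parametrization of $X_T$.

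The main obstacle I anticipate is bookkeeping the uniformity over the parameter $T$: the pre-parametrization results in \cite{unif,para} are typically phrased so that the family parameter is one of the "bounded" variables and the a-b-m structure with bounded-derivative monomial $b$ holds uniformly, but one must check that composing with $P_\alpha$ only in the fibre variable still lands in the scope of Theorem \ref{abmmild} — i.e. that we do not need $P_\alpha$ in the $T$-directions. This is exactly the point flagged in the remark at the end of Section \ref{sec41} ("we do not need the power substitution in $\epsilon$"): because the pre-parametrization already makes the maps mild (analytic) in the parameter directions with bounded derivatives, only the single fibre variable carries a genuine power-subanalytic singularity that $P_\alpha$ must absorb. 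Once this is pinned down, everything else is a direct appeal to Theorem \ref{abmmild} together with the closure of the class of $1/\alpha$-mild maps under composition with affine maps and under finite products, exactly as in the proof of that theorem.

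\begin{proof}
By \cite[Theorem 3.5]{para} (a uniform version of \cite[Theorem 4.3.1]{unif}), there is a finite collection of maps
\[
\psi_i : T \times (0,1) \to X_T, \quad i = 1,\ldots,N,
\]
such that for every $t \in T$ the images of $\psi_{i,t}$ cover $X_t$, and each component function of $\psi_i$ is, uniformly in $t$, analytic-bounded-monomial with an associated bounded-monomial map $b_i$ whose first order partial derivatives are bounded; the finitely many monomial exponents and the relevant units $F$ occurring do not depend on $t$. Since $X_T$ is a family of curves, after this pre-parametrization the dependence of $\psi_{i,t}$ on the single $(0,1)$-variable is the only source of a power-subanalytic singularity, while the dependence on $t$ is already analytic with uniformly bounded derivatives.

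Let $P = P_\alpha : (0,1) \to (0,1)$ and set
\[
\phi_i = \psi_i \circ (\id_T \times P) : T \times P_\alpha^{-1}\big((0,1)\big) \to X_T.
\]
The set $P_\alpha^{-1}((0,1)) = (0,1)$ is itself an open interval, and composing with the affine map sending $(0,1)$ onto it (which is $0$-mild) we may and do regard the domain as $T \times (0,1)$. For each fixed $t$, every component of $\psi_{i,t}$ is a-b-m with associated bounded-monomial map having bounded first derivatives, so by Theorem \ref{abmmild} every component of $\phi_{i,t}$ is $1/\alpha$-mild; inspecting the proofs of Proposition \ref{power} and Theorem \ref{abmmild}, the constants $A$ and $B$ depend only on the monomial exponents, the units $F$, and $\alpha$, hence are independent of $t$. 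No power substitution is needed in the $t$-directions, exactly as noted after Section \ref{sec41}. Therefore $\{\phi_i : i = 1,\ldots,N\}$ is a uniform $1/\alpha$-mild parametrization of $X_T$.
\end{proof}
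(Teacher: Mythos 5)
Your proposal is correct and follows essentially the same route as the paper: apply the pre-parametrization of \cite[Theorem 3.5]{para}, compose with $P_\alpha$ in the fibre variable only, and invoke Proposition \ref{power}/Theorem \ref{abmmild} with constants uniform in $t$. The one place the paper is more careful is that the pre-parametrization produces charts on a cell $C \subset T \times (0,1)$ rather than on all of $T \times (0,1)$, so the affine rescaling you mention must be applied to the $t$-dependent fibre $C_t$ (not to $P_\alpha^{-1}((0,1))$, which is all of $(0,1)$); this works uniformly precisely because, for curves, the walls of the cell depend only on $t$ and hence give $(1,1,0)$-mild reparametrizations for each fixed $t$.
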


\begin{proof}
By \cite[Theorem 3.5]{para} we obtain finitely many $f: C \subset T \times (0,1) \to [-1,1]^{n-1}$, where $C$ is an open cell in $T \times (0,1)$. The map $f$ is of the following form
\[
f(x) = b_j(t,x) F(b(t,x)),
\]
where $b_j$ is a component function of $b$, all component functions of $b$ are of the form $a(t)x^r$ for some definable function $a(t)$, $r \in \R$ and $F$ is a non-vanishing analytic function on an open neighborhood of $\overline{\im(b)}$. Moreover, $(\partial/\partial x)(b)$ is (uniformly) bounded. Consider the map
\[
P: T \times (0,1) \to T \times (0,1) : (t,x) \mapsto (t,P_\alpha(x))
\]
and set $\tilde{C} = P^{-1}(C)$. It follows that for any $t$ the map $(f \circ P)_t$ is $(A,B,1/\alpha)$-mild, where $A$ and $B$ are independent of $t$ (using the same technique as in the proof of Proposition \ref{power}, using the fact that $a(t)x^r$ and its first order derivative are uniformly bounded). Since in the case of curves the walls of the cell bounding the variable $x$ only depend on $t$, for any $t$ they are automatically $(1,1,0)$-mild.
\end{proof}

It would be interesting to know whether the techniques in this paper can be applied to the cellular maps in \cite{ccs} to obtain a uniform $1/\alpha$-mild parametrization of families of subanalytic sets (of arbitrary dimension). For power-subanalytic sets, one does not really require an analogue of \cite[Proposition 4.2.6]{unif}, as mentioned in the introduction of this section. One could argue by induction instead. The obstacle in that case is the fact that $P_\alpha^{-1}(C)$ is not power-subanalytic.

\bibliographystyle{alpha}
\bibliography{Mildlib}{}
\end{document}